\newtheorem{theorem}{Theorem}[section]
\newtheorem{proposition}[theorem]{Proposition}
\newtheorem{lemma}[theorem]{Lemma}
\newtheorem{corollary}[theorem]{Corollary}
\theoremstyle{definition}
\newtheorem{remark}[theorem]{Remark}
\begin{document}

\author[J. Cui]{Jian Cui}
\address{School of Mathematics and Statistics, Anhui Normal University \\ Wuhu, Anhui 241002, China}
\email{cui368@ahnu.edu.cn}

\author[P. Danchev]{Peter Danchev}
\address{Institute of Mathematics and Informatics, Bulgarian Academy of Sciences \\ "Acad. G. Bonchev" str., bl. 8, 1113 Sofia, Bulgaria}
\email{danchev@math.bas.bg; pvdanchev@yahoo.com} \vspace{0.5cm}
\author[D. Jin]{Danya Jin}
\address{School of Mathematics and Statistics, Anhui Normal University \\ Wuhu, Anhui 241002, China}
\email{jindanya@126.com} \vspace{0.5cm} {\small \bf Jian Cui ~,~
Peter Danchev ~and~ Danya-Jin\\}

\title[Rings whose Nil-Clean and Clean Elements ...] {Rings whose Nil-Clean and Clean Elements are \\
Uniquely Nil-Clean}
\keywords{Nil-clean elements; clean elements; uniquely nil-clean elements; strongly nil-clean elements; UU-rings}
\subjclass[2020]{16U60; 16U99; 16S34}

\maketitle

\begin{abstract} We consider and study those rings in which each nil-clean or clean
element is uniquely nil-clean. We establish that, for abelian rings,
these rings have a satisfactory description and even it is shown
that the classes of abelian rings and the rings in which nil-clean
elements are uniquely nil-clean do coincide. Moreover, we prove that
the rings in which clean elements are uniquely nil-clean coincide
with the subclass of abelian rings consisting of only unipotent
units and, in particular, that in the semipotent case we have a
complete characterization only in terms of the former ring and its
divisions. Likewise, some extension properties and group rings for
such kinds of rings are also considered.
\end{abstract}

\bigskip

\section{\bf Introduction and Conventions}

All rings into consideration in the current work are associative and contain an identity element. Let $R$ be a ring. The symbols $U(R)$, $J(R)$, ${\rm nil}(R)$ and ${\rm idem}(R)$ stand for the group of all units, the Jacobson radical, the set of all nilpotents of $R$ and the set of all idempotents of $R$, respectively. For an arbitrary $n\in \mathbb{N}$, the ring consisting of all $n\times n$ matrices over $R$ and the ring consisting of all $n\times n$ upper triangular matrices over $R$ are, respectively, denoted by $\mathbb{M}_n(R)$ and $\mathbb{T}_n(R)$. Let $\mathbb{Z}_n$ be the ring of integers modulo $n$.

Mimicking \cite{Ni1}, an element $r$ of a ring $R$ is called {\it clean}, provided $r=u+e$, where $u\in U(R)$ and $e\in {\rm idem}(R)$. If, in addition, $ue=eu$, the element $r$ is said to be {\it strongly clean} (see \cite{Ni2}). Likewise, if in the record of $r$ there exist a unique unit $u$ and an idempotent $e$ with $r=u+e$, the element $r$ is then termed {\it uniquely clean} (see \cite{NZ} and also \cite{CNZ} in the case of group rings). Some other valuable sources in this topic are \cite{CWZ}, \cite{He} and \cite{KLNZ}, as well. Moreover, when each element of a ring $R$ is equipped with one of these properties, the ring $R$ also retains the same name. It was proved in \cite{NZ} that a ring $R$ is uniquely clean if, and only if, $R$ is abelian (i.e., all idempotents of $R$ are central), $R/J(R)$ is boolean and all idempotents of $R$ lift modulo $J(R)$.

On the other side, imitating these notions, in \cite{Di} were defined the kinds of {\it nil-clean}, {\it strongly nil-clean} and {\it uniquely nil-clean} elements of rings as follows: An element $r$ of a ring $R$ is called {\it nil-clean}, provided $r=q+e$, where $q\in {\rm nil}(R)$ and $e\in {\rm idem}(R)$. If, in addition, $qe=eq$, the element $r$ is said to be {\it strongly nil-clean}. Likewise, if in the record of $r$ there exist a unique nilpotent $q$ and an idempotent $e$ with $r=q+e$, the element $r$ is then termed {\it uniquely nil-clean}. It is worth mentioning that the uniqueness of the nil-clean record in the commutative case is also confirmed in \cite{DMc}. When each element of a ring $R$ is equipped with one of these properties, the ring $R$ also retains the same name. On this
vein, it is also worth noticing that arbitrary strongly nil-clean rings were independently characterized in \cite{DL} and \cite{KWZ} as those rings $R$ {\it for which $J(R)$ is nil and $R/J(R)$ is boolean}.

It is well known that, and also easy to check that, nil-clean rings are always clean, while the converse fails. However, for the element-wise case, the situation is totally different and unexpected. For a more detailed information in this way, we refer to \cite{WTZ}. Nevertheless, one can say that if $2\in {\rm nil}(R)$, then each nil-clean element is necessarily clean.

Incidentally, while studying rings with unipotent units (that are rings $R$ for which $U(R)=1+ {\rm nil}(R)$ and, for shortness, hereafter abbreviating them as {\it UU-rings}), it was shown in \cite[Theorem 3.7]{DL} that in UU-rings any element is (strongly) clean if, and only if, it is (strongly) nil-clean, respectively (see Corollary 3.11 in \cite{DL} too). On that vein, recently, in \cite{CZ}, it was provided a comprehensive exploration of those rings whose clean elements are uniquely clean, calling them {\it CUC-rings}. They are a proper generalization of uniquely clean rings. Specifically, the authors considered certain crucial properties of such rings including their natural extensions and group rings.

Our motivating tool to investigate such rings is that the uniqueness of nil-clean elements is definitely {\it not} so good studied as that of clean elements, bearing in mind the discussion presented above. Thus, our further plan is to examine those rings in which every nil-clean or clean element is uniquely nil-clean, calling them {\it NCUNC}-rings and {\it CUNC}-rings, respectively, and thus extending the class of uniquely nil-clean rings. Concretely, in that
aspect, our main establishments are structured thus: In Section 2, we prove the unexpected fact that the rings in which nil-clean elements possess an unique record are exactly the abelian rings (see Theorem~\ref{01}). In Section 3, we describe rings in which clean elements have an unique record as nil-clean elements and show that these are precisely the abelian rings whose units are unipotents (i.e., a sum of the identity and a nilpotent) (see
Theorem~\ref{3-01}). In the case of semipotent rings, we give an other valuable classification and also provide a close relation with the aforementioned CUC-rings (see Theorem~\ref{3-05} and Proposition~\ref{3-07}). In the final fourth section, we study group rings of CUNC-rings. For a ring $R$ and a group $G$, we write $RG$ for the group ring of $G$ over $R$. We establish some necessary and sufficient conditions by proving that $RG$ is a {\rm CUNC}-ring if,
and only if, $R$ is a {\rm CUNC}-ring and $G$ is a $2$-group, whenever $G$ is a locally finite group (see Theorem \ref{4-8}).

\medskip


\medskip

\section{\bf Nil-clean elements are uniquely nil-clean}

We begin here with the following useful technicality.

\begin{proposition} Let $R$ be a ring. Then the following are equivalent$:$
\par$(1)$ Every nilpotent of $R$ is uniquely nil-clean.
\par$(2)$ $({\rm nil}(R)+{\rm nil}(R))\cap {\rm idem}(R)=\{0\}$.
\end{proposition}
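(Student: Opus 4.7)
The plan is to verify the equivalence directly from the definition of unique nil-clean decomposition, using the observation that $0 \in \operatorname{nil}(R)$ and that $-q \in \operatorname{nil}(R)$ whenever $q \in \operatorname{nil}(R)$, so every $q \in \operatorname{nil}(R)$ automatically admits the trivial nil-clean record $q = q + 0$. Unique nil-cleanness of $q$ then amounts to saying that this trivial record is the only one.

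For the direction $(1) \Rightarrow (2)$, I would start with an arbitrary element $e$ of $(\operatorname{nil}(R) + \operatorname{nil}(R)) \cap \operatorname{idem}(R)$, writing $e = q_1 + q_2$ with $q_1, q_2 \in \operatorname{nil}(R)$. Rearranging gives $q_1 = (-q_2) + e$, which is a nil-clean decomposition of the nilpotent $q_1$ since $-q_2 \in \operatorname{nil}(R)$. But $q_1 = q_1 + 0$ is another such decomposition. By hypothesis~(1), these two decompositions must coincide, forcing $e = 0$. Hence the intersection is $\{0\}$.

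For the converse $(2) \Rightarrow (1)$, I would take any nilpotent $q$ and any nil-clean presentation $q = q' + e$ with $q' \in \operatorname{nil}(R)$ and $e \in \operatorname{idem}(R)$. Then $e = q + (-q') \in \operatorname{nil}(R) + \operatorname{nil}(R)$ and $e \in \operatorname{idem}(R)$, so condition~(2) yields $e = 0$ and consequently $q' = q$. Thus the trivial decomposition $q = q + 0$ is the only nil-clean record of $q$, confirming~(1).

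No real obstacle is expected here: the statement is essentially an unpacking of the definition of \emph{uniquely nil-clean} at the level of nilpotent elements, with the key (but elementary) point being the symmetry of $\operatorname{nil}(R)$ under negation, which allows one to freely transpose a nilpotent from one side of an equation to the other without leaving $\operatorname{nil}(R)$.
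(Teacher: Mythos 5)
Your proposal is correct and follows essentially the same route as the paper's own proof: both directions rest on producing two nil-clean records of a single nilpotent (using that $\operatorname{nil}(R)$ is closed under negation) and invoking the hypothesis to force the idempotent to vanish. No meaningful difference in approach.
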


\begin{proof}
``(1) $\Rightarrow$ (2)". Let $e\in ({\rm nil}(R)+{\rm nil}(R))\cap
{\rm idem}(R)$. Then, $e=b+c$ for some $b,c\in {\rm nil}(R)$. So
$c=e+(-b)=0+c$ are two nil-clean decomposition of $c$. By
hypothesis, $e=0$, and hence $({\rm nil}(R)+{\rm nil}(R))\cap {\rm
idem}(R)=\{0\}$, as asserted.

``(2) $\Rightarrow$ (1)". For any $b\in {\rm nil}(R)$, it is clear
that $b=0+b$ is a nil-clean representation. Assume, for a moment,
that there exists another nil-clean decomposition, say $b=f+c$,
where $f\in {\rm idem}(R)$ and $c\in {\rm nil}(R)$. Then $f=b-c\in
({\rm nil}(R)+{\rm nil}(R))\cap {\rm idem}(R)=\{0\}$, and so $b$ is
uniquely nil-clean, as claimed.
\end{proof}

A ring $R$ is said to be a \emph{NCUNC-ring} if every nil-clean
element of $R$ is uniquely nil-clean. Recall once again that a ring
$R$ is \emph{abelian} if all idempotents of $R$ are central.
Besides, for shortness, we denote by $C_{k}^{n}$ the binomial
coefficient $\binom{n}{k}$, where $k\geq n\geq 1$ are integers.

\medskip

We now arrive at our first basic result, which is rather curious.

\begin{theorem}\label{01} Let $R$ be a ring. Then the following are equivalent$:$
\par$(1)$ Every nil-clean element of $R$ is uniquely nil-clean, that is, $R$ is a {\rm NCUNC}-ring.
\par$(2)$ Every strongly nil-clean element of $R$ is uniquely nil-clean.
\par$(3)$ Every idempotent of $R$ is uniquely nil-clean.
\par$(4)$ $R$ is an abelian ring.
\par$(5)$ Every idempotent of $R$ commutes with all nilpotents and $({\rm nil}(R)+{\rm nil}(R))\cap {\rm idem}(R)=\{0\}$.
\par$(6)$ $({\rm idem}(R)-{\rm idem}(R))\cap {\rm nil}(R)=\{0\}$.
\end{theorem}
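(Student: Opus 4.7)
The strategy is to close two cycles meeting at condition (4): a main cycle $(1) \Rightarrow (2) \Rightarrow (3) \Rightarrow (4) \Rightarrow (1)$ carrying the core equivalence, and an auxiliary cycle $(4) \Rightarrow (5) \Rightarrow (6) \Rightarrow (4)$ absorbing the remaining two characterizations. The descents $(1) \Rightarrow (2) \Rightarrow (3)$ are immediate, since every strongly nil-clean element is already nil-clean and every idempotent $e$ is strongly nil-clean via the record $e = e + 0$.

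The unifying device for both $(3) \Rightarrow (4)$ and $(6) \Rightarrow (4)$ is the ``off-diagonal'' element $x := er(1-e)$ attached to an idempotent $e$ and an arbitrary $r \in R$. A direct computation shows $x^2 = 0$ and $(e+x)^2 = e+x$, so $e+x$ is itself idempotent. Under (3), uniqueness applied to the two decompositions $e + x = (e+x) + 0$ and $e + x = e + x$ of the idempotent $e+x$ forces $x = 0$; under (6), the difference $(e+x) - e = x$ is a nilpotent difference of two idempotents and hence equals $0$. Either way $er = ere$, and a symmetric argument with $(1-e)re$ gives $re = ere$, so $e$ is central.

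For $(4) \Rightarrow (1)$, assume $e_1 + q_1 = e_2 + q_2$ with $e_i$ idempotent and $q_i$ nilpotent. The centrality of $e_1, e_2$ in (4) makes the element $e_1 - e_2 = q_2 - q_1$ central, and comparing $(e_1 - e_2)q_1 = q_1(e_1 - e_2)$ after substitution yields $q_1 q_2 = q_2 q_1$; commuting nilpotents have nilpotent difference, so $q_2 - q_1$, and hence $e_1 - e_2$, is nilpotent. But commutativity of $e_1, e_2$ also gives the tripotent identity $(e_1 - e_2)^3 = e_1 - e_2$, which iterated against nilpotency forces $e_1 = e_2$, and then $q_1 = q_2$.

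For the side cycle, $(4) \Rightarrow (5)$ reduces to the intersection condition: projecting $e = q_1 + q_2$ into the Peirce summand $eR$ exhibits the identity $e$ of $eR$ as a sum of two nilpotents $eq_1, eq_2$, so $eq_2 = e - eq_1$ is simultaneously a unit and nilpotent in $eR$, forcing $eR = 0$ and $e = 0$. For $(5) \Rightarrow (6)$, if $q := e - f$ is nilpotent with $e, f$ idempotent, then $eq = qe$ from the first clause of (5) yields $ef = fe$, whence $q^3 = (e-f)^3 = e - f = q$, and nilpotency of $q$ gives $q = 0$. The subtlest point throughout is $(4) \Rightarrow (1)$: one must extract commutativity of the two \emph{nilpotents} $q_1, q_2$ from the weaker datum that only idempotents are central, and the decisive observation is that the specific combination $e_1 - e_2 = q_2 - q_1$ happens to be central.
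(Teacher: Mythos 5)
Your proof is correct, and its global architecture --- the main cycle $(1)\Rightarrow(2)\Rightarrow(3)\Rightarrow(4)\Rightarrow(1)$ plus the loop through $(5)$ and $(6)$ re-entering at $(4)$ --- matches the paper's, which differs only in closing via $(6)\Rightarrow(3)$ instead of your $(6)\Rightarrow(4)$. Several individual implications are, however, argued by genuinely different micro-arguments. For $(3)\Rightarrow(4)$ you compare the two records $(e+x)+0$ and $e+x$ of the idempotent $e+x$ with $x=er(1-e)$, while the paper compares two records of $e$ itself with nilpotent parts $(1-e)re$ and $er(1-e)$; same trick. For $(4)\Rightarrow(1)$ the paper computes $(e-f)^2=(c-b)(e-f)\in{\rm idem}(R)\cap{\rm nil}(R)$ and then extracts $e=f$ from $e=(2e-1)f$ and $(2e-1)^2=1$, whereas you use the tripotent identity $(e_1-e_2)^3=e_1-e_2$ played against nilpotency; more importantly, you explicitly justify the one delicate point --- that $q_1$ and $q_2$ commute because their difference equals the central element $e_1-e_2$ --- which the paper glosses over by asserting that $e,f,b,c$ ``commute one another'' as if that followed directly from abelianness. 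For $(4)\Rightarrow(5)$, the paper's binomial expansion of $(e+c)^k$ followed by inverting a unit of the form $1+{}$nilpotent and your passage to the corner ring $eR$, where $e-eq_1$ is at once a unit and a nilpotent, are the same idea in different clothing. Your $(5)\Rightarrow(6)$ is leaner than the paper's: you need only the commuting clause of $(5)$ (giving $ef=fe$, hence $q^3=q$ for the nilpotent $q=e-f$), while the paper also routes through the intersection condition via $e(1-f)\in({\rm nil}(R)+{\rm nil}(R))\cap{\rm idem}(R)$. In sum: same skeleton, interchangeable details, and your write-up is slightly more careful at the one spot where the paper's justification is too quick.
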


\begin{proof} ``(1) $\Rightarrow$ (2)". This implication is obvious.

``(2) $\Rightarrow$ (3)". Clearly, each idempotent is strongly
nil-clean. Therefore, (2) follows by hypothesis.

``(3) $\Rightarrow$ (4)". Let $e^2=e\in R$. Thus, for any $r\in R$,
we obtain that $$e=(e-(1-e)re)+(1-e)re=(e-er(1-e))+er(1-e)$$ are two
nil-clean decompositions of $e$. So, we have $er(1-e)=(1-e)re$,
which implies that $er=ere=re$. Thus, $R$ is abelian.

``(4) $\Rightarrow$ (1)". Suppose that $e+b=f+c$ for some $e,f\in
{\rm idem}(R)$ and $b,c\in {\rm nil}(R)$. Then, $e-f=c-b$. As $R$ is
abelian, we know that $e,f,b$ and $c$ commute one another. So,
$$(e-f)^2=(c-b)(e-f)\in {\rm idem}(R)\cap {\rm nil}(R)=\{0\}.$$ It now follows
that $e=(2e-1)f$, whence $$e=(2e-1)e=(2e-1)^2f=f.$$ This ensures
that $R$ is a NCUNC-ring, as stated.

``(4) $\Rightarrow$ (5)". It is enough to show that $({\rm
nil}(R)+{\rm nil}(R))\cap {\rm idem}(R)=\{0\}$. To that goal, for
any $e\in ({\rm nil}(R)+{\rm nil}(R))\cap {\rm idem}(R)$, one has
that $e=b-c$ for some $b,c\in {\rm nil}(R)$. We may let $b^k=c^k=0$
with $k\geq 1$. Then, we deduce that
\begin{align*}
(e+c)^k&=e+C_k^1ec+\cdots +C_{k}^{k-1}ec^{k-1}+C_k^kc^k\\
&=e(1+C_k^1c+\cdots +C_{k}^{k-1}c^{k-1})\\
&=b^k=0.
\end{align*}
Note that $C_k^1c+\cdots +C_{k}^{k-1}c^{k-1}\in {\rm nil}(R).$ So,
we can easily verify that
$$1+C_k^1c+\cdots +C_{k}^{k-1}c^{k-1}\in U(R),$$ which assures $e=0$,
as required.

``(5) $\Rightarrow$ (6)". Take $$x\in ({\rm idem}(R)-{\rm
idem}(R))\cap {\rm nil}(R).$$ Then, there exist $e,f\in {\rm idem}(R)$
satisfying $x=e-f$. By hypothesis, we have $$e(1-f)=x-xf\in ({\rm
nil}(R)+{\rm nil}(R))\cap {\rm idem}(R)=\{0\}.$$ It now follows that
$$-x=-xf=(f-e)f=(1-e)f$$ must be zero, as needed.

``(6) $\Rightarrow$ (3)". Given $e^2=e\in R$. Clearly, $e$ is
nil-clean. Write $e=f+b$, where $f\in {\rm idem}(R)$ and $b\in {\rm
nil}(R)$. So, one has $$e-f=b\in ({\rm idem}(R)-{\rm idem}(R))\cap
{\rm nil}(R)=\{0\}.$$ Thus, $e=f$, which proves that $e$ is uniquely
nil-clean, as formulated.
\end{proof}

Now, according to Theorem \ref{01}, one sees that commutative rings, reduced rings and rings with
trivial idempotents are all NCUNC-rings.

\begin{remark} We claim that the condition ``$({\rm nil}(R)+{\rm nil}(R))\cap {\rm
idem}(R)=\{0\}$" in Theorem \ref{01}(5) does not imply that $R$ is
abelian. For example, set $R=\mathbb{T}_2(\mathbb{Z}_2)$. Then, one
inspects by simple matrix calculations that $$({\rm nil}(R)+{\rm
nil}(R))\cap {\rm idem}(R)=\{0\},$$ but $R$ is manifestly not
abelian.
\end{remark}

In view of ``(3) $\Leftrightarrow$ (4)" in Theorem \ref{01}, we
derive the following result immediately.

\begin{corollary}\label{02}
An idempotent $e$ of a ring $R$ is uniquely nil-clean if, and only if,
$e$ is central.
\end{corollary}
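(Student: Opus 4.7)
The plan is to prove the corollary by localizing, to the single idempotent $e$, the two implications ``(3) $\Rightarrow$ (4)'' and ``(4) $\Rightarrow$ (1)'' already established in Theorem~\ref{01}.

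For the necessity, I would assume $e\in{\rm idem}(R)$ is uniquely nil-clean and, for an arbitrary $r\in R$, reuse verbatim the device from the proof of ``(3) $\Rightarrow$ (4)'':
$$e=\bigl(e-(1-e)re\bigr)+(1-e)re=\bigl(e-er(1-e)\bigr)+er(1-e).$$
In either representation the parenthesized term is an idempotent and the remaining summand has square zero, so both are bona fide nil-clean decompositions of $e$. The uniqueness assumption then forces $(1-e)re=er(1-e)$, and cancelling $ere$ on both sides yields $re=er$. Since $r$ was arbitrary, $e$ is central.

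For the sufficiency, I would assume $e$ is a central idempotent and take any nil-clean decomposition $e=f+b$ with $f\in{\rm idem}(R)$ and $b\in{\rm nil}(R)$. Centrality of $e$ gives $ef=fe$ and $eb=be$, and then $fb=f(e-f)=fe-f=ef-f=(e-f)f=bf$, so $e$, $f$ and $b$ pairwise commute. This is precisely the commutativity on which the ``(4) $\Rightarrow$ (1)'' argument of Theorem~\ref{01} rests, and that argument now transfers unchanged: $(e-f)^2$ is simultaneously idempotent (because $ef=fe$) and nilpotent (as $(e-f)^2=b^2$), and hence equals zero by condition~(6) of Theorem~\ref{01}; from $(e-f)^2=0$ one derives $e=(2e-1)f$, and then the involution $(2e-1)^2=1$ forces $f=e$ and consequently $b=0$. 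Uniqueness follows.

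I expect no genuine obstacle: the corollary is the element-level shadow of the equivalence ``(3) $\Leftrightarrow$ (4)'' from Theorem~\ref{01}, and the only point calling for comment is the observation that the hypothesis ``$e$ is central'' is already strong enough to make $\{e,f,b=e-f\}$ a pairwise commuting set, which is all the commutativity that the computational core of the theorem ever uses.
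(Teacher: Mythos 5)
Your proof is correct and is essentially the paper's own argument: the paper derives the corollary directly from the equivalence (3) $\Leftrightarrow$ (4) of Theorem~\ref{01}, and your write-up simply makes explicit what that one-line derivation leaves implicit, namely that the ``(3) $\Rightarrow$ (4)'' computation is already element-wise and that centrality of $e$ alone forces $e$, $f$ and $b=e-f$ to commute pairwise, which is all the ``(4) $\Rightarrow$ (1)'' argument needs. One small citation slip: the vanishing of $(e-f)^2$ follows from the trivial fact ${\rm idem}(R)\cap {\rm nil}(R)=\{0\}$, valid in any ring, and not from condition (6) of Theorem~\ref{01}, which is an abelianness criterion you are not entitled to assume at that point.
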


Next, owing to Theorem \ref{01}, we can derive a similar result to that from \cite{CZ}.

\begin{corollary}\label{03}
Let $I$ be a nil-ideal of a ring $R$. Then the following are
equivalent$:$
\par$(1)$ $R$ is an abelian ring.
\par$(2)$ $R/I$ is an abelian ring and idempotents lift uniquely modulo $I$.
\end{corollary}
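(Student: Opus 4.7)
The plan is to reduce everything to the characterization ``$R$ is abelian $\Longleftrightarrow$ every idempotent of $R$ is uniquely nil-clean'' from Theorem~\ref{01}, combined with the classical fact that idempotents always lift modulo a nil ideal. Throughout, observe that nilpotents of $R$ map to nilpotents of $R/I$ (and since $I$ is nil, nilpotent pre-images of nilpotents remain nilpotent as well), so ``nil-clean'' transfers smoothly in both directions.

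For $(1)\Rightarrow(2)$, assume $R$ is abelian. Then every idempotent of $R$ is central, hence so is its image in $R/I$, which shows that $R/I$ is abelian as well. Existence of lifting of idempotents modulo the nil ideal $I$ is standard. For uniqueness, suppose $e^2=e$ and $f^2=f$ lift the same idempotent of $R/I$; then $e-f\in I\subseteq {\rm nil}(R)$, so by the equivalence $(4)\Leftrightarrow(6)$ of Theorem~\ref{01} we have
\[
e-f\in ({\rm idem}(R)-{\rm idem}(R))\cap {\rm nil}(R)=\{0\},
\]
that is, $e=f$.

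For $(2)\Rightarrow(1)$, I invoke the equivalence $(3)\Leftrightarrow(4)$ of Theorem~\ref{01} and check that each idempotent of $R$ is uniquely nil-clean. Fix $e^2=e\in R$ and a putative second nil-clean decomposition $e=f+b$ with $f^2=f$ and $b\in{\rm nil}(R)$. Passing to $R/I$ gives two nil-clean decompositions $\bar e=\bar e+0=\bar f+\bar b$ of $\bar e$. Since $R/I$ is abelian, Theorem~\ref{01} applied to $R/I$ yields $\bar e=\bar f$ and $\bar b=0$. Hence $e$ and $f$ are idempotents of $R$ with the same image in $R/I$, and unique lifting forces $e=f$, as required.

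The only delicate point is that one must carry the nil-clean decomposition across the quotient map in both directions, which works precisely because $I$ is nil; there is no real obstacle once Theorem~\ref{01} is in hand.
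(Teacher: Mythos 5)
Your proposal is correct and follows essentially the same route as the paper: both directions are reduced to Theorem~\ref{01} (abelian $\Leftrightarrow$ idempotents are uniquely nil-clean), with the standard lifting of idempotents modulo a nil ideal, and your $(2)\Rightarrow(1)$ argument is the paper's almost verbatim. The only cosmetic difference is that for uniqueness of lifts in $(1)\Rightarrow(2)$ you invoke equivalence $(4)\Leftrightarrow(6)$ of Theorem~\ref{01} where the paper cites Corollary~\ref{02}, which rests on the same theorem.
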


\begin{proof} The proof surely can be extracted from \cite[Lemma 2.8(1)]{CZ}, but
we give an independent proof from the view point of the nil-cleanness and uniquely
nil-cleanness.

``(1) $\Rightarrow$ (2)". Let $\overline{e}^2=\overline{e}\in R/I$.
As $I$ is nil, idempotents lift modulo $I$ in the traditional
manner. So, we may let $e\in {\rm idem}(R)$. Notice that $e$ is
central, and so $\overline{e}$ is central too. Now, to prove that
idempotent lift uniquely modulo $I$, we assume that there exists
$f\in {\rm idem}(R)$ such that $e-f\in I$. Then, $e=f+b$ for some
$b\in I\subseteq {\rm nil}(R)$. However, in view of Corollary
\ref{02}, we obtain that $e=f$, as desired.

``(2) $\Rightarrow$ (1)". Given $e\in {\rm idem}(R)$. Let $e=f+b$ be
a nil-clean decomposition with $e\in {\rm idem}(R)$ and $b\in {\rm
nil}(R)$. Then, $\overline{e}=\overline{f}+\overline{b}$ is a
nil-clean decomposition of $\overline{e}\in R/I$. Since the quotient-ring $R/I$ is
abelian, it follows that $\overline{e}=\overline{f}$ by applying
Theorem \ref{01}. Hence, $e=f$, and thus $e$ is uniquely nil-clean in
$R$. Therefore, $R$ is abelian, as wanted.
\end{proof}

\medskip


\medskip

\section{\bf Clean elements are uniquely nil-clean}

It is well known that nil-clean rings are clean (see also
\cite{Di}), but there exist nil-clean elements which are not clean
(\cite{WTZ}). In this section, we study rings whose clean elements
are uniquely nil-clean. We say that a ring $R$ is a \emph{CUNC-ring}
if every clean element of $R$ is uniquely nil-clean. Recall that a
ring $R$ is known to be a \emph{UU-ring} (see, e.g., \cite{DL}) if $U(R)=1+{\rm nil}(R)$.

\medskip

We are now able to prove the following major assertion.

\begin{theorem}\label{3-01}
Let $R$ be a ring. Then the following are equivalent$:$
\par$(1)$ $R$ is a {\rm CUNC}-ring.
\par$(2)$ $R$ is an abelian {\rm UU}-ring.
\end{theorem}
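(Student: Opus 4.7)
The plan is to prove both implications by leveraging Theorem~\ref{01}, which identifies NCUNC-rings with abelian rings, so that the only extra content in CUNC is the clean-to-nil-clean passage. The key arithmetic facts I will use are that in a UU-ring one has $2\in\mathrm{nil}(R)$ (since $-1=1+q$ forces $q=-2$ nilpotent), and that in an abelian ring the sum of two commuting nilpotents is nilpotent.

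For the direction (2)$\Rightarrow$(1), suppose $R$ is an abelian UU-ring and take a clean element $r=u+e$ with $u\in U(R)$, $e\in\mathrm{idem}(R)$. Writing $u=1+q$ with $q\in\mathrm{nil}(R)$, I rewrite
\[
r = (1+q)+e = (1-e)+(2e+q),
\]
where $1-e$ is an idempotent and $2e+q$ is a sum of two commuting nilpotents (since $2,q\in\mathrm{nil}(R)$ and $e$ is central), hence nilpotent. Thus $r$ is nil-clean. Since $R$ is abelian, Theorem~\ref{01} then gives that every nil-clean element is uniquely nil-clean, so $r$ is uniquely nil-clean.

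For the direction (1)$\Rightarrow$(2), I proceed in two steps. First, every idempotent $e\in R$ is clean via the standard decomposition $e=(2e-1)+(1-e)$, where $2e-1$ is a unit (its own inverse) and $1-e$ is idempotent. By the CUNC hypothesis, $e$ is then uniquely nil-clean, and Corollary~\ref{02} (equivalently, Theorem~\ref{01}(3)$\Leftrightarrow$(4)) gives that $e$ is central; hence $R$ is abelian. Second, let $u\in U(R)$. Since $u=u+0$ is clean, it is nil-clean by the CUNC hypothesis, so $u=q+f$ with $q\in\mathrm{nil}(R)$ and $f\in\mathrm{idem}(R)$. Using that $R$ is abelian, I compute $u(1-f)=q(1-f)$, which is nilpotent; but $u(1-f)$ times the unit $u^{-1}$ gives $1-f$ itself as an idempotent equal to a nilpotent, forcing $1-f=0$, i.e., $f=1$ and $u\in 1+\mathrm{nil}(R)$. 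The reverse inclusion $1+\mathrm{nil}(R)\subseteq U(R)$ is standard via geometric inversion. Hence $R$ is a UU-ring.

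The main obstacle I anticipate is the argument that the idempotent part of a unit's nil-clean decomposition must equal $1$; this is where the abelianness is essential, because it allows me to move factors freely and use the fact that a nilpotent idempotent is zero. Everything else is essentially bookkeeping around Theorem~\ref{01} and the identity $u=(1-e)+(2e+q)$.
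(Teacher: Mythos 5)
Your proof is correct, and both implications follow the same basic strategy as the paper's: establish abelianness from the unique nil-cleanness of idempotents via Theorem~\ref{01}, extract $f=1$ from a nil-clean decomposition of a unit, and for the converse use the rewriting $u+e=(1-e)+(2e+q)$ together with $2\in\mathrm{nil}(R)$ to see that clean elements are nil-clean. The one genuine divergence is in the uniqueness step of (2)$\Rightarrow$(1): the paper re-proves uniqueness from scratch by invoking \v{S}ter's result that $\mathrm{nil}(R)$ is a subring of a UU-ring and computing $(e-f)^3=(n_2-n_1)^3\in\mathrm{nil}(R)$, whereas you simply observe that an abelian ring is already NCUNC by Theorem~\ref{01}, so uniqueness is automatic once nil-cleanness is established. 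Your route is more economical and avoids the external citation; the paper's route is self-contained at that point but logically redundant given Theorem~\ref{01}. A second minor variation: in (1)$\Rightarrow$(2) the paper argues that $e=u-b$ is an idempotent unit (hence $1$), while you show $u(1-f)=q(1-f)$ is nilpotent and cancel the unit to force $1-f=0$; both are sound, and your version makes the role of centrality of $f$ slightly more explicit.
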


\begin{proof} ``(1) $\Rightarrow$ (2)". Notice the well-known fact that all idempotents are clean. By
hypothesis, all idempotents are uniquely nil-clean. Furthermore, in
view of Theorem \ref{01}, we conclude that $R$ is abelian. Now,
given $u\in U(R)$, we elementarily see that $u$ is nil-clean since
it is clean. Thus, we write that $u=e+b$, where $e\in {\rm idem}(R)$
and $b\in {\rm nil}(R)$. From $R$ is abelian, one obtains that
$e=u-b\in U(R)$, and hence $e=1$. So, $R$ is $UU$, as asserted.

``(2) $\Rightarrow$ (1)". Suppose that $a\in R$ is clean. Write
$a=(1-e)+u$ with $e\in {\rm idem}(R)$ and $u\in U(R)$. As $R$ is a
UU-ring, it must be that $u=1+b$ for some $b\in {\rm nil}(R)$ and
$2\in {\rm nil}(R)$. It now follows that
$$a=(1-e)+u=e+(1-2e+u)=e+(2-2e+b),$$ where $n_1:=2-2e+b$ is
nilpotent since $R$ is abelian. This proves that $a=e+n_1$ is
nil-clean. If, however, there is another nil-clean decomposition
$a=f+n_2$, where $f\in {\rm idem}(R)$ and $n_1\in {\rm nil}(R)$,
then we get that $e-f=n_2-n_1$. By application of \cite[Corollary
2.4]{Ster}, ${\rm nil}(R)$ is a subring of $R$ as $R$ is a
$UU$-ring. So, we may write that $$e-f=(e-f)^3=(n_2-n_1)^3\in {\rm
nil}(R),$$ and then it follows at once that $e=f$. Therefore, $a$ is
uniquely nil-clean, as claimed.
\end{proof}

The next statement is a direct consequence of a simple combination of Theorems~\ref{01} and \ref{3-01}.

\begin{corollary}\label{3-06}
Let $R$ be a ring. Then the following are equivalent$:$
\par$(1)$ $R$ is a {\rm CUNC}-ring.
\par$(2)$ $R$ is abelian and every clean element is nil-clean.
\par$(3)$ $R$ is a {\rm NCUNC}-ring and every unit is nil-clean.
\end{corollary}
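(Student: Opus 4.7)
The plan is to establish the three-way equivalence by running the small cycle (1)$\Rightarrow$(2), (1)$\Rightarrow$(3) followed by (2)$\Rightarrow$(1) and (3)$\Rightarrow$(1), feeding each step directly into Theorem~\ref{01} (NCUNC $\Leftrightarrow$ abelian) or Theorem~\ref{3-01} (CUNC $\Leftrightarrow$ abelian UU). The background facts used silently throughout are that every unit $u$ is clean via $u = u + 0$ and every idempotent is clean (e.g.\ $e = (2e-1) + (1-e)$ with $2e-1$ a unit), so in a CUNC-ring every unit and every idempotent is in particular already nil-clean.

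Assume (1). By Theorem~\ref{3-01}, $R$ is an abelian UU-ring; being uniquely nil-clean of course implies being nil-clean, so (2) follows. For (3), abelianness together with Theorem~\ref{01} upgrades $R$ to an NCUNC-ring, and each unit of $R$ is clean, hence nil-clean, by the CUNC hypothesis. The implication (2)$\Rightarrow$(1) is then essentially automatic: under (2), Theorem~\ref{01} makes $R$ an NCUNC-ring, so every nil-clean element is uniquely nil-clean, and composing with ``clean $\Rightarrow$ nil-clean'' yields (1).

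The only step with any real content is (3)$\Rightarrow$(1). From NCUNC, Theorem~\ref{01} delivers abelianness, so in view of Theorem~\ref{3-01} it suffices to verify that $R$ is UU. Given $u \in U(R)$, the hypothesis provides a nil-clean decomposition $u = e + b$ with $e^2 = e$ and $b \in {\rm nil}(R)$, and the key point is to force $e = 1$. Since $R$ is abelian, $1 - e$ is central and hence commutes with $b$, so $u(1-e) = b(1-e)$ is the product of the nilpotent $b$ with a commuting element, and is therefore nilpotent. Commuting $1-e$ back through $u$ and raising to an appropriate power yields $u^k(1-e) = 0$, and cancelling the unit $u^k$ gives $1 - e = 0$, so $u = 1 + b$. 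Thus $R$ is UU, and Theorem~\ref{3-01} closes the loop.

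The main obstacle, modest as it is, lies in this last collapse of the idempotent in the nil-clean decomposition of a unit in (3)$\Rightarrow$(1); the remaining implications are pure bookkeeping between the two prior theorems and the trivial observations that units and idempotents are clean.
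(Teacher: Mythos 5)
Your proposal is correct and follows the paper's intended route: the paper proves this corollary simply by combining Theorems~\ref{01} and \ref{3-01}, which is exactly what you do. Your (3)$\Rightarrow$(1) computation (multiplying $u=e+b$ by the central idempotent $1-e$ to force $e=1$) is a valid minor variant of the argument already inside the proof of Theorem~\ref{3-01}, where one instead observes that $e=u-b$ is a unit and hence equals $1$.
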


We now concentrate on the following reduction assertion.

\begin{corollary}\label{3-04}
Let $R$ be a ring. Then the following are equivalent$:$
\par$(1)$ $R$ is a {\rm CUNC}-ring.
\par$(2)$ $J(R)$ is nil, $R/J(R)$ is a {\rm CUNC}-ring and idempotents lift
uniquely modulo $J(R)$.
\end{corollary}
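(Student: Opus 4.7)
The plan is to reduce this corollary to a combination of Theorem~\ref{3-01}, which identifies the CUNC-rings with the abelian UU-rings, and Corollary~\ref{03}, which handles the abelian/idempotent-lifting part across a nil ideal. Once both are invoked, the entire statement boils down to showing that the UU-property transfers between $R$ and $R/J(R)$ whenever $J(R)$ is nil; the idempotent-lifting clause comes essentially for free from Corollary~\ref{03}.

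For the forward direction, I would first invoke Theorem~\ref{3-01} to get $R$ abelian and UU. The standard inclusion $1 + J(R) \subseteq U(R) = 1 + {\rm nil}(R)$ then forces $J(R) \subseteq {\rm nil}(R)$, so $J(R)$ is nil. Corollary~\ref{03} applied with $I = J(R)$ now supplies both the abelianness of $R/J(R)$ and the unique lifting of idempotents modulo $J(R)$. What remains is to see that $R/J(R)$ is UU: for a unit $\bar u$ in the quotient, lift to $u \in U(R)$ (units lift modulo the Jacobson radical), write $u = 1 + n$ with $n \in {\rm nil}(R)$, and project back to obtain $\bar u = 1 + \bar n$ with $\bar n$ nilpotent in $R/J(R)$. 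Theorem~\ref{3-01} then certifies $R/J(R)$ as a CUNC-ring.

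For the reverse direction, Theorem~\ref{3-01} applied to $R/J(R)$ gives that the quotient is abelian UU, and Corollary~\ref{03} then promotes $R$ itself to an abelian ring. To close I need $R$ to be UU: for $u \in U(R)$, the image satisfies $\bar u = 1 + \bar m$ with $\bar m^k = 0$ in $R/J(R)$, hence $(u-1)^k \in J(R)$, and the nilness of $J(R)$ forces $(u-1)^{k\ell} = 0$ for a suitable $\ell$. Thus $u - 1 \in {\rm nil}(R)$, so $R$ is UU, and Theorem~\ref{3-01} finishes the argument.

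The only genuinely new step I expect to have to write out is precisely this pull-back of nilpotency through $J(R)$; it is neither deep nor long, but it is the single piece not delivered directly by Theorem~\ref{3-01} or Corollary~\ref{03}. Everything else is a tidy concatenation of those two prior results.
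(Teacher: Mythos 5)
Your proof is correct and follows essentially the same route as the paper: reduce via Theorem~\ref{3-01} to the abelian and UU properties, handle abelianness and unique idempotent lifting with Corollary~\ref{03}, and transfer the UU property across the nil radical. The only difference is that the paper outsources that last transfer to \cite[Theorem 2.4(2)]{DL}, whereas you prove it directly (correctly) by pulling nilpotency back through $J(R)$.
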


\begin{proof}
``(1) $\Rightarrow$ (2)". Utilizing Theorem \ref{3-01}, $R$ is an
abelian UU-ring. So, $R/J(R)$ is  UU, and $J(R)$ is nil by
consulting with \cite[Theorem 2.4(2)]{DL}. Furthermore, employing
Corollary \ref{03}, it follows that idempotents lift uniquely modulo
$J(R)$ and $R/J(R)$ is abelian. Now, applying Theorem \ref{3-01},
 $R/J(R)$ is a CUNC-ring.

``(2) $\Rightarrow$ (1)". Since $R/J(R)$ is a CUNC-ring, $R/J(R)$ is
an abelian UU-ring. By hypothesis and with Corollary \ref{03} at
hand, $R$ has to be abelian. Now, as $R/J(R)$ is a UU-ring and
$J(R)$ is nil, \cite[Theorem 2.4(2)]{DL} applies to conclude that
$R$ is a UU-ring, as expected.
\end{proof}

It is worthwhile to note the following: one readily verifies that
the ring $\mathbb{T}_2(\mathbb{Z}_2)$ is a UU-ring, but not a
CUNC-ring as it is not abelian and even more, for any ring $R$ and
any integer $n\geq 2$, both rings $\mathbb{M}_n(R)$ and
$\mathbb{T}_n(R)$ are {\it not} CUNC.

\begin{corollary}\label{3-08}
If $R$ is a {\rm CUNC}-ring, then the corner $eRe$ is a {\rm CUNC}-ring for any
$e^2=e\in R$.
\end{corollary}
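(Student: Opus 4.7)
The plan is to reduce everything to Theorem~\ref{3-01}, which says that CUNC is the same as being abelian plus UU. So the task becomes: show that abelian UU is inherited by corners.

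First I would dispose of the abelian part, which is standard and quick. Any idempotent $f$ of the corner $eRe$ satisfies $f^2=f$ in $R$, so it is central in $R$ by hypothesis; restricting the equation $fx=xf$ to $x\in eRe$ gives that $f$ is central in $eRe$. Thus $eRe$ is abelian.

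The main step is to show that $eRe$ is a UU-ring, where the identity element is $e$. Given $u\in U(eRe)$ with inverse $v\in eRe$, the element $u+(1-e)\in R$ has inverse $v+(1-e)$, so it lies in $U(R)$. Since $R$ is a UU-ring, there exists $n\in\mathrm{nil}(R)$ with $u+(1-e)=1+n$, i.e.\ $n=u-e$. Because $u\in eRe$ we compute $ene=eue-e=u-e=n$, so $n$ actually lies in the corner. Moreover, $n$ is nilpotent in $R$, and since multiplication in $eRe$ agrees with multiplication in $R$, $n$ is nilpotent in $eRe$. Therefore $u=e+n$ with $n\in\mathrm{nil}(eRe)$, proving that $U(eRe)\subseteq e+\mathrm{nil}(eRe)$; the reverse inclusion is automatic (a unipotent is always a unit, using that $R$ is abelian so that the usual geometric-series inverse stays in $eRe$). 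Hence $eRe$ is a UU-ring.

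Combining the two properties and invoking Theorem~\ref{3-01} gives that $eRe$ is a CUNC-ring. I do not anticipate any real obstacle; the only point that requires a moment's care is verifying that the nilpotent $n$ produced from the UU-property of $R$ is captured inside the corner $eRe$, which is handled by the one-line computation $ene=n$ above.
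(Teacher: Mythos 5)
Your proof is correct, and it reaches the conclusion by a slightly different route than the paper. You verify the two conditions of Theorem~\ref{3-01} directly: abelianness of $eRe$ (immediate, since idempotents of the corner are idempotents of $R$) and the UU property, which you obtain by sending a unit $u\in U(eRe)$ to the unit $u+(1-e)\in U(R)$, extracting the nilpotent $n=u-e$ from the UU property of $R$, and observing that $n$ already lies in $eRe$. The paper instead invokes Corollary~\ref{3-06} and checks that every \emph{clean} element of $eRe$ is nil-clean: it lifts a clean decomposition $a=f+w$ of $eRe$ to the clean decomposition $a+(1-e)=f+(w+(1-e))$ of $R$, uses the CUNC hypothesis to get a nil-clean decomposition $a+(1-e)=g+b$ in $R$, and then cuts back down via $a=ege+ebe$, using centrality of $e$. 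Both arguments hinge on the same embedding $x\mapsto x+(1-e)$; yours is arguably a bit cleaner because the nilpotent you produce is automatically captured in the corner, with no need to project an idempotent back down. One small inaccuracy that does not affect correctness: the reverse inclusion $e+\mathrm{nil}(eRe)\subseteq U(eRe)$ does not require $R$ to be abelian --- the geometric-series inverse of $e+n$ lies in $eRe$ simply because $eRe$ is closed under its own operations and $en=ne=n$.
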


\begin{proof}
It is clear that the subring of an abelian ring is still abelian. By assumption, $eRe$ is abelian. In view of Corollary \ref{3-06}, it suffices to show that every clean element of $eRe$ is also nil-clean. To that goal, assume
$a\in eRe$ is clean. Then, there exist $f^2=f\in eRe$ and $w\in U(eRe)$ such that $a=f+w$. So, one has  $$a+(1-e)=f+(w+(1-e))$$ is a clean decomposition in $R$. It thus follows that $a+(1-e)$ is nil-clean
in $R$. Hence, we write that $$a+(1-e)=g+b$$ with $g^2=g\in R$ and $b\in {\rm nil}(R)$. It now follows that $a=ege+ebe$ is a nil-clean element in the corner ring $eRe$, because $e$ is central, as required.
\end{proof}

The next statement is helpful and shows the inheritance by subrings and direct products of the property being a CUNC-ring. Note that our subrings need {\it not} be unital (herein, a subring of a given ring is {\it unital} whenever it contains the same identity element).

\begin{proposition}\label{3-03}
\par$(1)$ A subring $S$ of a {\rm CUNC}-ring $R$ is again a {\rm CUNC}-ring.
\par$(2)$ A direct product $\prod R_i$ is a {\rm CUNC}-ring if, and
only if, so is $R_i$ for each $i$.
\end{proposition}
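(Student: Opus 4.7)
The plan is to exploit Theorem~\ref{3-01}, which identifies the CUNC-rings with precisely the abelian UU-rings, so that both parts reduce to showing that the conjunction \emph{abelian-and-UU} is inherited by subrings and by direct products respectively.

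\emph{Part (1).} Let $S$ be a (possibly non-unital) subring of a CUNC-ring $R$, and set $e := 1_S$, which is an idempotent of $R$. Every idempotent of $S$ is also an idempotent of $R$, hence central in $R$ (as $R$ is abelian), so $S$ is abelian. To see that $S$ is UU, take $u \in U(S)$ with $S$-inverse $v$. The key step is to complete $u$ inside $R$ by the orthogonal idempotent: using $ue = u = eu$ and $ve = v = ev$, together with $(1_R - e)^2 = 1_R - e$, a short computation shows that $u + (1_R - e)$ and $v + (1_R - e)$ are mutually inverse in $R$, so $u + (1_R - e) \in U(R) = 1_R + {\rm nil}(R)$. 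Writing $u + (1_R - e) = 1_R + q$ yields $u = e + q = 1_S + q$ with $q = u - e \in S$; and the equation $q^k = 0$ is the same whether computed in $R$ or in $S$, so $q \in {\rm nil}(S)$. The reverse inclusion $1_S + {\rm nil}(S) \subseteq U(S)$ is the standard finite geometric-series argument. Hence $S$ is an abelian UU-ring, and so a CUNC-ring by Theorem~\ref{3-01}.

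\emph{Part (2).} The ``only if'' direction is immediate from part~(1), since each $R_i$ is isomorphic, as a ring with its own identity, to the coordinate non-unital subring of $\prod R_j$. For the converse, assume each $R_i$ is abelian UU. Any idempotent of $\prod R_i$ is a coordinatewise tuple of idempotents central in each factor, so $\prod R_i$ is abelian; and a unit of $\prod R_i$ is a tuple $(u_i)$ with each $u_i = 1 + q_i$, $q_i \in {\rm nil}(R_i)$, so that $(u_i) = \mathbf{1} + (q_i)$ with $(q_i)$ nilpotent, giving the UU property.

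The main obstacle is really the lift in part~(1): one must locate an inverse of $u$ \emph{inside} $U(R)$ in order to invoke the UU-hypothesis on $R$, and the device of adding the complementary idempotent $1_R - e$ is precisely what accomplishes this. The orthogonality relations $u(1_R - e) = 0 = (1_R - e)v$ then make the verification formal, and the remainder of both parts is routine coordinate bookkeeping.
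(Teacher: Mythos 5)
Your part (1) is, step for step, the paper's own argument: reduce via Theorem~\ref{3-01} to the abelian and UU properties, note that subrings of abelian rings are abelian, and lift a unit $u\in U(S)$ to the unit $u+(1_R-1_S)$ of $R$ with inverse $v+(1_R-1_S)$, then pull the nilpotent part back into $S$ via $u-1_S\in S\cap{\rm nil}(R)={\rm nil}(S)$. This is exactly what the paper does and it is correct.

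For part (2) you verify the abelian and UU conditions coordinatewise, whereas the paper checks, via Corollary~\ref{3-06}, that a clean tuple is nil-clean coordinatewise; the two routes are interchangeable. But your argument and the paper's share the same defect when the index set is infinite: a tuple whose entries are nilpotent need not be nilpotent in $\prod R_i$ unless the nilpotency indices are uniformly bounded. Your sentence ``so that $(u_i)=\mathbf{1}+(q_i)$ with $(q_i)$ nilpotent'' is precisely where this fails (the paper's ``which in turn insures that $(a_i)$ is nil-clean in $\prod R_i$'' fails for the same reason). Concretely, take $R_i=\mathbb{Z}_2[x]/(x^i)$ for $i\geq 1$: each $R_i$ is a commutative UU-ring, hence CUNC by Theorem~\ref{3-01}, yet the unit $(1+x,1+x,\dots)$ of the product differs from $\mathbf{1}$ by $(x,x,\dots)$, which is not nilpotent, so the infinite product is not UU and hence not CUNC. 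Thus the sufficiency in (2) is valid only for finite products (or under a uniform bound on the nilpotency indices); in the finite case your proof, like the paper's, is complete.
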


\begin{proof} (1) One easily checks that any subring of an abelian ring is abelian as well. However, in
view of Theorem \ref{3-01}, it suffices to show only that $S$ is a
UU-ring. For convenience, we use $1_R$ and $1_S$ to denote the
identity of $R$ and $S$, respectively. To that goal, let $v\in
U(S)$. Then, $v+(1_R-1_S)\in U(R)$ with its inverse
$v^{-1}+(1_R-1_S)$. By assumption, we have $v+(1_R-1_S)=1_R+b$ for
some $b\in {\rm nil}(R)$, which in turn implies that $v=1_S+b$ and
$$b=v-1_S\in S\cap {\rm nil}(R)={\rm nil}(S).$$ Thus, $S$ is UU, as pursued.

(2) For the sufficiency, it is clear that $\prod R_i$ is abelian as
all $R_i$ are abelian. Let $(a_i)\in \prod R_i$ be a clean element.
Then, by plain element-wise arguments, it is evident that all
vector's components $a_i$ are clean. Referring to Corollary
\ref{3-06}, one gets that $a_i$ is nil-clean for each $i$, which in
turn insures that $(a_i)$ is nil-clean in $\prod R_i$. Hence, $\prod
R_i$ a CUNC-ring, as formulated. The necessity follows from (1).
\end{proof}

The next result contrasts to \cite[Corollary 2.13(1)]{CZ} and somewhat refines \cite[Corollary 2.14(1)]{CZ}.

\begin{proposition}
\par$(1)$ For any ring $R$, the power series ring $R[[x]]$ is not {\rm CUNC}.
\par$(2)$ A polynomial ring $R[x]$ over a commutative ring
$R$ is {\rm CUNC} if, and only if, so is $R$.
\end{proposition}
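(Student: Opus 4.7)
The overall plan is to reduce both parts to Theorem~\ref{3-01}, which identifies CUNC-rings with abelian UU-rings. Thus for (1) I would exhibit an obstruction to the UU-property in $R[[x]]$, while for (2) I would argue that the abelian and UU-properties transfer between $R$ and $R[x]$ in both directions.

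For (1), the key observation is that the element $1+x$ is always a unit of $R[[x]]$, with the usual geometric-series inverse $\sum_{i\ge 0}(-x)^i$, whereas $x$ itself is not nilpotent in $R[[x]]$: each power $x^k$ has nonzero coefficient $1$ in degree $k$. Hence $1+x\notin 1+{\rm nil}(R[[x]])$, so $R[[x]]$ fails to be UU, and by Theorem~\ref{3-01} it cannot be CUNC. (For the degenerate case $R=0$ the claim is vacuous, so I would tacitly assume $R\neq 0$.)

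For (2), the necessity is immediate from Proposition~\ref{3-03}(1), since $R$ is a unital subring of $R[x]$. For the sufficiency, I would assume $R$ is commutative and CUNC, so by Theorem~\ref{3-01} that $R$ is abelian and UU. Commutativity of $R$ yields commutativity of $R[x]$, and so $R[x]$ is abelian. To verify the UU-property I would invoke the standard description of units in a polynomial ring over a commutative ring: $f=\sum_{i=0}^n a_i x^i\in U(R[x])$ iff $a_0\in U(R)$ and $a_i\in{\rm nil}(R)$ for every $i\ge 1$. Since $R$ is UU, $a_0-1\in{\rm nil}(R)$; consequently $f-1=(a_0-1)+a_1 x+\cdots+a_n x^n$ is a finite sum of nilpotents in the commutative ring $R[x]$, and the nilradical of a commutative ring being an ideal gives $f-1\in{\rm nil}(R[x])$. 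Hence $R[x]$ is UU, and a second appeal to Theorem~\ref{3-01} completes the argument.

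The only real obstacle is the unit description in $R[x]$ used for (2); it relies essentially on $R$ being commutative, as neither this characterization nor the closure of nilpotents under addition holds in general in the non-commutative setting. This is precisely why the commutativity hypothesis on $R$ cannot be dropped from statement (2).
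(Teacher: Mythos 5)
Your proof is correct. Part (2) follows essentially the same path as the paper: necessity via Proposition~\ref{3-03}(1), and sufficiency via the standard description of $U(R[x])$ over a commutative ring together with the fact that nilpotents of a commutative ring form an ideal (the paper phrases this as ${\rm nil}(R[x])={\rm nil}(R)[x]$, which is the same thing), followed by an appeal to Theorem~\ref{3-01}. Part (1) is where you genuinely diverge: the paper argues that $J(R[[x]])$ is not nil and then invokes Corollary~\ref{3-04}, whereas you exhibit the concrete unit $1+x$ whose ``unipotent part'' $x$ is not nilpotent, so $R[[x]]$ fails to be a UU-ring and Theorem~\ref{3-01} applies directly. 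Your route is more elementary and self-contained --- it avoids knowing the structure of the Jacobson radical of a power series ring --- while the paper's is shorter given that Corollary~\ref{3-04} is already available; the two obstructions are of course closely related, since $x\in J(R[[x]])$ is exactly the non-nilpotent element witnessing that $J(R[[x]])$ is not nil. Your parenthetical remark about the degenerate case $R=0$ is a fair point that the paper passes over silently.
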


\begin{proof} (1) Note the principal fact that the Jacobson radical of $R[[x]]$ is not nil. Thus, in view
of Corollary \ref{3-04}, $R[[x]]$ is really {\it not} a {\rm CUNC}-ring.

(2) It is a well-known fact that ${\rm nil}(R[x])={\rm nil}(R)[x]$
(see, for instance, \cite{L}) whenever $R$ is commutative. First,
assume that $R$ is a {\rm CUNC}-ring. Let $f(x)=\sum_{i=0}^na_ix^i
\in U(R[x])$. Then, as it is known in the existing literature (see,
e.g., \cite{L}), it follows that $a_0\in U(R)$ and $a_i\in {\rm
nil}(R)$ for all $1\leq i \leq n$. By hypothesis, we can write
$a_0=1+b_0$ for some $b_0\in {\rm nil}(R)$. It thus follows that
$$f(x)=1+b_0+\sum_{i=1}^na_ix^i\in 1+{\rm nil}(R[x]).$$ Finally, Theorem \ref{3-01} applies to get that $R[x]$ is, indeed, a CUNC-ring, as claimed.

The converse claim follows with the aid of Proposition \ref{3-03}(1).
\end{proof}

A ring $R$ is known to be \emph{semipotent} if every one-sided ideal {\it not}
contained in $J(R)$ contains a non-zero idempotent. It is a
well-known fact that clean rings are semipotent (see, e.g.,
\cite{Ni1}).

\medskip

We now manage to prove the following chief result.

\begin{theorem}\label{3-05}
Let $R$ be a ring. Then the following are equivalent$:$
\par$(1)$ $R$ is a uniquely nil-clean ring.
\par$(2)$ $R$ is a semipotent {\rm CUNC}-ring.
\par$(3)$ $R$ is abelian, $J(R)$ is nil and $R/J(R)$ is boolean.
\end{theorem}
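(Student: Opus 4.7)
The plan is to prove the cycle $(1)\Rightarrow(2)\Rightarrow(3)\Rightarrow(1)$. Two of the three arrows are short applications of results already in hand; the work is concentrated in $(2)\Rightarrow(3)$.

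For $(1)\Rightarrow(2)$: every element of $R$ is nil-clean, hence clean by the implication recorded in the introduction, so $R$ is clean and therefore semipotent, while the {\rm CUNC} condition is trivial because every element is already uniquely nil-clean. For $(3)\Rightarrow(1)$: the hypothesis is exactly the characterisation of strongly nil-clean rings from \cite{DL} and \cite{KWZ} recalled in the introduction, so every element of $R$ is nil-clean; combined with $R$ being abelian, Theorem~\ref{01} promotes each nil-clean decomposition to a unique one.

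The substantive work is $(2)\Rightarrow(3)$. Theorem~\ref{3-01} and Corollary~\ref{3-04} yield that $R$ is an abelian {\rm UU}-ring with $J(R)$ nil, so writing $\bar R := R/J(R)$ what remains is to show $\bar R$ is boolean. First I would prove $\bar R$ is reduced: given $\bar a^2 = 0$, any lift $a\in R$ is nilpotent (since $J(R)$ is nil), and if $a\notin J(R)$ then $aR\not\subseteq J(R)$, so semipotency of $R$ provides a nonzero idempotent $e=as$, central because $R$ is abelian. Inside the corner $eR$ one has $(ea)(es)=e\cdot as=e$ while $(ea)^n=ea^n=0$ for $n$ large, making $ea$ a right-invertible nilpotent of $eR$ and thus forcing $e=0$, a contradiction. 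Hence $\bar R$ is reduced, and since $R$ is {\rm UU} this gives $U(\bar R)=1+{\rm nil}(\bar R)=\{1\}$. Next, for $a\in \bar R$ with $a(1-a)\neq 0$, semipotency of $\bar R$ (inherited since $J(R)$ is nil) produces a nonzero central idempotent $e=a(1-a)r$, and centrality yields
$$(ea)\bigl(e-ea\bigr)(er) \;=\; e\cdot a(1-a)r \;=\; e,$$
so $(ea)(e-ea)=e$ in the reduced unital corner $e\bar R$; since $e\bar R$ is abelian and reduced, both $ea$ and $e-ea$ must be units of $e\bar R$. However, the Peirce decomposition $\bar R=e\bar R\oplus(1-e)\bar R$ combined with $U(\bar R)=\{1\}$ forces $U(e\bar R)=\{e\}$, so $ea=e=e-ea$, giving $e=0$, a contradiction. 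Therefore $a^2=a$ for every $a\in \bar R$ and $\bar R$ is boolean.

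The main obstacle is this last Booleanness step: the data on $\bar R$ is thin (abelian, reduced, semipotent, $J=0$, $U=\{1\}$) and one has to wring out enough structure to force every element to be an idempotent. The decisive trick is transplanting $U(\bar R)=\{1\}$ through the Peirce decomposition into $U(e\bar R)=\{e\}$, which causes the equation $x(e-x)=e$ inside $e\bar R$ to collapse under its own weight.
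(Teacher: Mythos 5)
Your proposal is correct, and for the hard implication $(2)\Rightarrow(3)$ it is essentially the paper's argument in a different dress: both proofs use semipotency to extract a nonzero (central) idempotent $e\in(a-a^2)R$, observe that $ea$ and $e(1-a)$ are then units of the corner at $e$, and let the {\rm UU} condition collapse them to force $e=0$. The paper stays inside $R$ (forming $v=ea+(1-e)\in U(R)$ and noting that $v-1=-e(1-a)$ would be a nilpotent unit of $eRe$), whereas you pass to $\bar R=R/J(R)$, first prove $\bar R$ is reduced so that $U(\bar R)=\{1\}$, and then read off $U(e\bar R)=\{e\}$ through the Peirce decomposition; your reducedness lemma is correct but is extra machinery the paper avoids. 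Where you genuinely diverge is $(3)\Rightarrow(1)$: you invoke the characterisation of strongly nil-clean rings ($J(R)$ nil and $R/J(R)$ boolean) to get that every element is nil-clean and then apply Theorem~\ref{01}, which is shorter and self-contained, while the paper routes through Corollary~\ref{3-04} and the Han--Nicholson lifting result to show $R$ is clean. One small slip in your write-up: from the displayed identity $(ea)(e-ea)(er)=e$ you assert ``$(ea)(e-ea)=e$'', which is not what the display gives; what you actually need (and what does follow, since the factors commute in the abelian corner) is that $(ea)(e-ea)$ is right invertible, hence that both $ea$ and $e-ea$ are units of $e\bar R$ -- the rest of your argument then goes through unchanged.
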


 \begin{proof} ``(1) $\Rightarrow$ (2)". It is clear that $R$ is a CUNC-ring. Since
uniquely nil-clean rings are nil-clean and so clean, we derive that $R$ is semipotent.

``(2) $\Rightarrow$ (3)". By virtue of Theorem \ref{3-01} and
Corollary \ref{3-04}, it suffices to show that $R/J(R)$ is boolean.
To this purpose, assume on the contrary. Then, there exists $a\in R$ such that
$a-a^2\not\in J(R)$. Since $R$ is semipotent, there is a non-zero
$e\in {\rm idem}(R)$ satisfying $e\in (a-a^2)R$. Therefore, $e=(a-a^2)t$
for some $t\in R$. As $e$ is central, we may write $$e=(ea)(e(1-a))(et),$$ which
yields that both $ea$ and $e(1-a)$ are units of the corner ring $eRe$. Writing now
$v=ea+(1-e)$, we then obtain $v\in U(R)$. But since $R$ is a UU-ring, it must be that
$$v-1=-e(1-a)\in {\rm nil}(R),$$ which is the wanted contradiction. Hence, $R/J(R)$ is
Boolean.

``(3) $\Rightarrow$ (1)". It is clear that Boolean rings are always
uniquely nil-clean. By assumption, $R/J(R)$ is a CUNC-ring. In
accordance with Corollary \ref{3-04}, we have $R$ is a CUNC-ring.
Now, to prove that $R$ is uniquely nil-clean, we only need to show
that each element of $R$ is clean. Indeed, the quotient $R/J(R)$ is a clean ring,
and idempotents lift modulo $J(R)$. Consequently, \cite[Proposition
6]{HN} allows us to deduce that $R$ is clean, as required.
\end{proof}

Combing Theorem \ref{3-05} with the corresponding results from \cite{Ni1}, we obtain the following result immediately.

\begin{corollary}
If $R$ is a {\rm CUNC}-ring, then the following are equivalent$:$
\par$(1)$ $R$ is a uniquely nil-clean ring.
\par$(2)$ $R$ is a nil-clean ring.
\par$(3)$ $R$ is a clean ring.
\par$(4)$ $R$ is an exchange ring.
\par$(5)$ $R$ is a semipotent ring.
\end{corollary}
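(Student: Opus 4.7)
The plan is to close a cycle of implications $(1)\Rightarrow(2)\Rightarrow(3)\Rightarrow(4)\Rightarrow(5)\Rightarrow(1)$, in which only the final link invokes the standing CUNC hypothesis; each of the other four is a classical fact valid for arbitrary rings.

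First I would dispatch the two trivialities: $(1)\Rightarrow(2)$ is built into the definition, since a uniquely nil-clean ring is in particular nil-clean; and $(2)\Rightarrow(3)$ is the observation already recalled at the beginning of Section~3 that every nil-clean ring is clean. Next I would invoke Nicholson's classical results from \cite{Ni1}: $(3)\Rightarrow(4)$ is the fact that every clean ring is an exchange ring, and $(4)\Rightarrow(5)$ is the fact that every exchange ring is semipotent (any one-sided ideal not contained in $J(R)$ contains a non-zero idempotent). Both were already quoted, or tacitly used, earlier in the paper.

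The only implication that genuinely uses ``$R$ is CUNC'' is $(5)\Rightarrow(1)$, but this is precisely the equivalence $(1)\Leftrightarrow(2)$ of Theorem~\ref{3-05}: a semipotent CUNC-ring is uniquely nil-clean. Once this is invoked, the cycle closes.

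I do not foresee any substantial obstacle; the corollary is essentially a bookkeeping argument that wraps Theorem~\ref{3-05} inside the standard chain clean $\Rightarrow$ exchange $\Rightarrow$ semipotent. The only point requiring a little care is citing \cite{Ni1} accurately for the two classical implications, and silently recording that in a CUNC-ring $J(R)$ is automatically nil (via Corollary~\ref{3-04}), which is what allows Theorem~\ref{3-05} to be applied in the closing step.
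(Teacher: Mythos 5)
Your proposal is correct and matches the paper's intent exactly: the paper's own ``proof'' is the one-line remark that the corollary follows by combining Theorem~\ref{3-05} with the classical results of Nicholson in \cite{Ni1}, and your cyclic chain $(1)\Rightarrow(2)\Rightarrow(3)\Rightarrow(4)\Rightarrow(5)\Rightarrow(1)$ is precisely the bookkeeping that remark leaves implicit. The only step using the CUNC hypothesis is $(5)\Rightarrow(1)$ via Theorem~\ref{3-05}, just as you identify.
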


We proceed by proving the next claim, which gives a relationship between CUC-rings from \cite{CZ} and our CUNC-rings in the case of semi-potentness.

\begin{proposition}\label{3-07} Suppose that $R$ is a semipotent ring. Then $R$ is a {\rm CUNC}-ring if, and only if, $R$ is a {\rm CUC}-ring and $J(R)$ is nil.
\end{proposition}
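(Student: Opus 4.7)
The plan is to reduce both implications to Theorem~\ref{3-05}, which already identifies the semipotent CUNC-rings with the uniquely nil-clean rings, and then to bridge to CUC-rings via the Nicholson--Zhou characterisation of uniquely clean rings recalled in the introduction.

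For the forward direction, if $R$ is a semipotent CUNC-ring then Theorem~\ref{3-05} immediately yields that $R$ is uniquely nil-clean, so $R$ is abelian, $J(R)$ is nil, and $R/J(R)$ is boolean. Since $J(R)$ is nil, idempotents automatically lift modulo $J(R)$, so by the \cite{NZ} characterisation of uniquely clean rings, $R$ is uniquely clean; in particular every clean element has a unique clean decomposition, so $R$ is a CUC-ring, and $J(R)$ is nil as required.

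For the converse, suppose $R$ is a semipotent CUC-ring with $J(R)$ nil. From \cite{CZ} one knows that every CUC-ring is abelian, so $R$ is abelian. The substantive step is to prove that $R/J(R)$ is boolean, after which Theorem~\ref{3-05}, implication $(3)\Rightarrow(1)$, gives that $R$ is uniquely nil-clean; every element is then uniquely nil-clean, so in particular every clean element is uniquely nil-clean, which is exactly the CUNC property.

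To see that $R/J(R)$ is boolean I would mimic the argument of Theorem~\ref{3-05}, implication $(2)\Rightarrow(3)$, replacing the UU-contradiction with a uniqueness-of-clean-decomposition contradiction. Assume for contradiction some $a\in R$ with $a-a^2\notin J(R)$; semipotency produces a nonzero idempotent $e\in(a-a^2)R$, say $e=(a-a^2)t$. Centrality of $e$ gives $e=(ea)(e(1-a))(et)$, so $ea$ and $e(1-a)$ are units in $eRe$. Setting $v=ea+(1-e)$, the Peirce decomposition $R=eR\oplus(1-e)R$ induced by the central idempotent $e$ shows that both $v$ and $v-e=e(a-1)+(1-e)$ are units of $R$, the $e$-component being $ea$ respectively $e(a-1)=-e(1-a)$ and the $(1-e)$-component being $1-e$ in each case. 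Hence $v=0+v=e+(v-e)$ are two genuinely distinct clean decompositions of the clean element $v$, contradicting the CUC hypothesis and forcing $e=0$. The main obstacle is verifying unithood of $v-e$ rather than mere non-zero-divisorhood, and it is precisely here that splitting $R$ into its Peirce summands through the central idempotent $e$ is indispensable.
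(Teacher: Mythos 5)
Your proof is correct, but it takes a genuinely different route from the paper's. The paper disposes of this proposition in one line, by combining the external characterisation of semipotent CUC-rings from \cite[Theorem 3.2]{CZ} with Theorems~\ref{3-01} and \ref{3-05}, so essentially all of the content is outsourced to that cited theorem. You instead keep the argument internal: your forward direction passes through Theorem~\ref{3-05} and the Nicholson--Zhou criterion for unique cleanness quoted in the introduction (abelian, $R/J(R)$ boolean, idempotents lift modulo $J(R)$), and your converse re-runs the semipotency argument from the proof of Theorem~\ref{3-05}, replacing the UU-contradiction (``$v-1$ would be a nilpotent that is a unit of $eRe$'') by the CUC-contradiction that $v=0+v=e+(v-e)$ are two distinct clean decompositions. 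That substitution is legitimate: $v-e$ has Peirce components $-e(1-a)$ and $1-e$, both units of the respective corners of the central idempotent $e$, so $v-e\in U(R)$; and the one delicate point you inherit verbatim from the paper --- that $e=(ea)(e(1-a))(et)$ makes $ea$ and $e(1-a)$ genuine units rather than merely one-sided invertible elements of $eRe$ --- is harmless here because $eRe$ is abelian, hence Dedekind-finite. The only input you take from \cite{CZ} is the fact that CUC-rings are abelian, a much lighter citation than the full Theorem 3.2. The trade-off is clear: the paper's proof is shorter, while yours effectively reproves the relevant portion of \cite[Theorem 3.2]{CZ} and renders the proposition self-contained within the methods of Section 3.
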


\begin{proof} Combining \cite[Theorem 3.2]{CZ} with Theorems~\ref{3-01} and \ref{3-05}, we are done.
\end{proof}

\medskip


\medskip

\section{\bf CUNC group rings}

Let $R$ be a ring, $G$ a group, and, as usual, we denote by $RG$ the group ring of $G$ over $R$.

In this section, we intend to establish a criterion for a group ring to be CUNC under some minimal restrictions on the former group and ring objects. Specifically, we are able to achieve this, provided the group is locally finite. Recall that a group $G$ is a \emph{$2$-group} if every its element has order that is a power of $2$. Also, we recollect that a group is \emph{locally finite} if each finitely generated subgroup is finite.

\medskip

The next two technicalities are helpful to us for proving the main result stated below.

\begin{lemma}\label{4-01} Suppose that $R$ is a {\rm CUNC}-ring. Then the following
hold$:$
\par$(1)$ $2\in {\rm nil}(R)$.
\par$(2)$ If $I$ is a nil-ideal of $R$, then $R/I$ is a {\rm CUNC}-ring.
\end{lemma}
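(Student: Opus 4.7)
The plan is to reduce both claims to the characterization $R$ is CUNC iff $R$ is abelian and UU (Theorem~\ref{3-01}), together with the fact from Corollary~\ref{03} that abelianness passes to quotients by nil ideals.

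For part~(1), I will observe that $-1 \in U(R)$, so the UU-property yields $-1 = 1 + b$ for some $b \in {\rm nil}(R)$, whence $2 = -b \in {\rm nil}(R)$. This is immediate and requires no further argument.

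For part~(2), by Theorem~\ref{3-01} it suffices to show that $R/I$ is abelian and UU. The abelianness of $R/I$ follows directly from Corollary~\ref{03} since $R$ is abelian and $I$ is nil. For the UU-property, I would argue that every unit of $R/I$ lifts to a unit of $R$: if $\bar u \in U(R/I)$, then $u$ has an inverse modulo $I$, so $uv = 1 + i$ with $i \in I \subseteq J(R)$ (as nil ideals lie in the Jacobson radical), forcing $uv \in U(R)$, and similarly on the other side, so $u \in U(R)$. Since $R$ is UU we get $u = 1 + b$ with $b \in {\rm nil}(R)$, and reducing modulo $I$ gives $\bar u = \bar 1 + \bar b$ with $\bar b$ nilpotent in $R/I$. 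Hence $R/I$ is UU, and together with the abelianness we conclude via Theorem~\ref{3-01} that $R/I$ is CUNC.

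No real obstacle is expected here; the lemma is essentially a bookkeeping consequence of Theorem~\ref{3-01} and Corollary~\ref{03}. The only minor care needed is to justify the lifting of units modulo the nil ideal $I$, which rests on the standard fact $I \subseteq J(R)$ for nil ideals.
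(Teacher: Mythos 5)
Your proof is correct and follows essentially the same route as the paper: both reduce the lemma to the characterization of CUNC-rings as abelian UU-rings (Theorem~\ref{3-01}) and to Corollary~\ref{03} for passing abelianness to $R/I$. The only difference is that where the paper cites \cite{DL} (Theorem 2.6(1) for $2\in{\rm nil}(R)$ and Theorem 2.4(1) for the UU-property of $R/I$), you supply the short direct arguments -- the computation $-1=1+b$ and the lifting of units modulo the nil ideal $I\subseteq J(R)$ -- both of which are valid.
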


\begin{proof}
$(1)$ follows by Theorem \ref{3-01} and \cite[Theorem 2.6(1)]{DL}.

$(2)$ Notice that, consulting with Theorem~\ref{3-01}, $R$ is both
abelian and UU. As $I$ is nil, by a similar argument to that in the implication ``(1)
$\Rightarrow$ (2)" from Corollary \ref{03}, the factor-ring $R/I$ is abelian. Further, in view
of \cite[Theorem 2.4(1)]{DL}, we deduce that the quotient $R/I$ a UU-ring. Thus,
$R/I$ is a {\rm CUNC}-ring by the usage of Theorem \ref{3-01}, as stated.
\end{proof}

\begin{lemma}\label{4-05}
Let $R$ be a ring and let $G$ be a group. If $RG$ is a {\rm CUNC}-ring, then $R$ is a {\rm CUNC}-ring and $G$ is a $2$-group.
\end{lemma}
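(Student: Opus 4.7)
The plan is to treat the two claims separately. For the first claim that $R$ is a {\rm CUNC}-ring, observe that the map $r\mapsto r\cdot 1_G$ realizes $R$ as a unital subring of $RG$, so Proposition~\ref{3-03}(1) applies immediately and delivers the conclusion.

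For the second claim, I would first invoke Theorem~\ref{3-01} to rewrite the hypothesis as ``$RG$ is abelian and $U(RG)=1+{\rm nil}(RG)$'', and then use Lemma~\ref{4-01}(1) to obtain $2\in {\rm nil}(R)$, so that $2^s=0$ in $R$ for some $s\geq 1$; in particular, any odd integer $m$ becomes a unit in $R$ because $\gcd(m,2^s)=1$. Now fix an arbitrary $g\in G$. Since $g\in U(RG)$, the {\rm UU}-property forces $g-1\in {\rm nil}(RG)$, and the task is to show that the order of $g$ is a power of $2$.

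I would first rule out infinite order: if $g$ had infinite order, the elements $1,g,g^2,\ldots$ would be $R$-linearly independent in $RG$; choosing $k$ with $(g-1)^k=0$ and expanding by the binomial theorem (legitimate because $g$ commutes with $1$) produces an $R$-linear relation among $1,g,\ldots,g^k$ whose leading coefficient equals $1$, contradicting $R\neq 0$. Hence $g$ has finite order, say $n=2^a m$ with $m$ odd, and passing to $h=g^{2^a}$ reduces the problem to showing $m=1$. The decisive trick is the element $S=1+h+h^2+\cdots+h^{m-1}\in RG$: on the one hand $(h-1)S=h^m-1=0$, and on the other hand $S=m\cdot 1_{RG}+\sum_{i=1}^{m-1}(h^i-1)$, where each $h^i-1$ lies in ${\rm nil}(RG)$ by the {\rm UU}-property. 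Since these nilpotents commute pairwise inside the commutative subring generated by $h$, their sum is again nilpotent; as $m$ is a unit, $S$ is therefore a unit, and cancelling in $(h-1)S=0$ yields $h=1$, so $m=1$, as required.

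The step I anticipate as the main obstacle is ensuring that the ``unit plus nilpotent is a unit'' argument really goes through in the potentially noncommutative ring $RG$, since a sum of nilpotents in a general noncommutative ring need not be nilpotent. This difficulty is neutralized by descending to the commutative subring $R\langle h\rangle\subseteq RG$, inside which the standard ideal property of nilpotents does hold; the same observation also underwrites the linear-independence step that disposes of infinite order.
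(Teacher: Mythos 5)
Your proof of the first assertion is identical to the paper's: $R$ embeds as a unital subring of $RG$ and Proposition~\ref{3-03}(1) applies. For the second assertion you take a genuinely different route. The paper first passes to the quotient $RG/2RG\cong (R/2R)G$, which is again {\rm CUNC} because $2RG$ is a nil ideal (Lemma~\ref{4-01}), and then, working in characteristic $2$, converts $(1-g)^{2^m}=0$ directly into $g^{2^m}-1=(g-1)^{2^m}=0$ by the freshman's dream. You instead stay inside $RG$, dispose of infinite order via the $R$-linear independence of the powers of $g$, write the order as $2^a m$ with $m$ odd, and kill the odd part by showing that $S=1+h+\cdots+h^{m-1}$ (with $h=g^{2^a}$) is a unit --- an invertible integer plus a sum of pairwise commuting nilpotents $h^i-1$ --- while $(h-1)S=0$ forces $h=1$. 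Both arguments are correct; the paper's reduction is shorter but leans on the stability of the {\rm CUNC} property under quotients by nil ideals, whereas yours is longer but more self-contained, using only the {\rm UU} property of $RG$ and the invertibility of odd integers when $2$ is nilpotent. One cosmetic point: since a {\rm CUNC}-ring need not be commutative (abelian only means idempotents are central), the subring $R\langle h\rangle$ you invoke need not be commutative; however, every element you actually manipulate ($m\cdot 1_{RG}$ and the $h^i-1$) lies in the commutative subring $\mathbb{Z}[h]$, so the commuting-nilpotents step goes through unchanged.
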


\begin{proof}
As $R$ can be viewed as a subring of $RG$, we refer to Proposition
\ref{3-03}(1) to get that $R$ is a {\rm CUNC}-ring. Now, Lemma
\ref{4-01} applies to infer that $2$ is nilpotent in both $RG$ and
$R$. which implies that $2RG$ is a nil-ideal of $RG$, and so it
follows that $(R/2R)G\cong RG/2RG$ is a {\rm CUNC}-ring.
Furthermore, we may assume $2=0\in R$. Given any $g\in G$, we know
that $g\in U(RG)$. By assumption, $1-g$ is nilpotent in $RG$, say
$(1-g)^{2^m}=0$ for some $m\geq 1$. This guarantees that
$$g^{2^m}-1=(g-1)^{2^m}=0,$$ and, therefore, $g^{2^m}=1$. Thus, $G$
is a $2$-group, as required.
\end{proof}

We recall now that the map $f:RG\rightarrow R$, defined by
$f(\sum_{g\in G} a_g g)=\sum_{g\in G}a_g$, is a surjective ring
homomorphism. The kernel of $f$ is called the \emph{augmentation
ideal} of $RG$ and, standardly, is denoted by $\omega (RG)$. It is
well known that $\omega (RG)$ is an ideal of $RG$ generated by the
set $\{1-g:g\in G\}$.

\medskip

Our next technical claim asserts the following.

\begin{lemma}\label{4-06}
If $R$ is a {\rm CUNC}-ring and $G$ is a locally finite $2$-group, then $RG$ is a {\rm CUNC}-ring.
\end{lemma}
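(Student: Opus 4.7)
The plan is to verify, via Theorem~\ref{3-01}, that $RG$ is both abelian and a UU-ring. The pivot for both properties is to first establish that the augmentation ideal $\omega(RG)$ is nil, after which the structure of $RG/\omega(RG)\cong R$ transfers to $RG$ by a lifting argument. From Lemma~\ref{4-01}(1) one has $2\in{\rm nil}(R)$. Any $\alpha\in\omega(RG)$ has finite support, hence lies in $\omega(RG_0)$ for a finitely generated, and therefore (by local finiteness of $G$) finite, subgroup $G_0\leq G$. Since $G_0$ is a finite $2$-group and $2$ is nilpotent in $R$, an induction on $|G_0|$ using a central involution $z\in G_0$, the two-sided ideal $(1-z)RG_0$, the identity $(1-z)^2=2(1-z)$, and the inductive nilpotence of $\omega(R(G_0/\langle z\rangle))$, yields that $\omega(RG_0)$ is nilpotent. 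Hence $\omega(RG)\subseteq{\rm nil}(RG)$.

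For abelianness of $RG$, I would take any $\varepsilon\in{\rm idem}(RG)$ and consider its augmentation image $\bar\varepsilon\in R$, which is an idempotent and, by Theorem~\ref{3-01}, central in $R$. Viewed inside $R\subseteq RG$, the element $\bar\varepsilon$ is in fact central in the whole of $RG$, because elements of the centre of $R$ commute with every $g\in G$ by the group ring axioms. Now $\varepsilon-\bar\varepsilon\in\omega(RG)$ is nilpotent and commutes with $\bar\varepsilon$, and the identity $(e-f)^3=e-f$ for commuting idempotents forces $\varepsilon=\bar\varepsilon$. Thus every idempotent of $RG$ lies in $R$ and is central in $RG$, so $RG$ is abelian.

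For the UU property, let $u\in U(RG)$; its augmentation image $\bar u\in U(R)$ satisfies $(\bar u-1)^k=0$ in $R$ for some $k$, because $R$ is UU by Theorem~\ref{3-01}. Therefore $(u-1)^k\in\omega(RG)$, and since $\omega(RG)$ is nil, $u-1$ is nilpotent in $RG$, i.e., $u\in 1+{\rm nil}(RG)$. A final application of Theorem~\ref{3-01} then delivers that $RG$ is a CUNC-ring.

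The chief obstacle is the first step: showing that $\omega(RG)$ is nil in the locally finite setting. The finite-support reduction to finite $2$-subgroups is smooth, so the real content lies in the classical nilpotence of $\omega(RH)$ for a finite $2$-group $H$ over a ring with $2\in{\rm nil}(R)$, handled by the induction sketched above. Once this is secured, the verifications of abelianness and of the UU property are routine transfers from $R$ to $RG$ via $\omega(RG)$.
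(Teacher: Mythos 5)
Your proof is correct, and its skeleton coincides with the paper's: establish that $RG$ is abelian and a UU-ring, then invoke Theorem~\ref{3-01}. The difference is one of self-containment versus citation, plus a small structural reorganization. The paper obtains abelianness by quoting \cite[Lemma 11]{CNZ} (every idempotent of $RG$ lies in $R$ when $R$ is abelian with $2\in J(R)$ and $G$ is a locally finite $2$-group), obtains the nilness of $\omega(RG)$ from \cite[Proposition 16(ii)]{Co}, and lifts the UU property along $R\cong RG/\omega(RG)$ via \cite[Theorem 2.4(1)]{DL}. You instead prove the nilness of $\omega(RG)$ directly (the finite-support reduction plus the induction on a central involution $z$ with $(1-z)^2=2(1-z)$ is a standard and complete argument, since a nontrivial finite $2$-group has a central involution), and then make $\omega(RG)$ nil do double duty: it yields abelianness through the observation that for an idempotent $\varepsilon$ the element $\varepsilon-\bar\varepsilon$ is a nilpotent satisfying $x^3=x$, hence zero, and it yields the UU property because $(u-1)^k\in\omega(RG)$ whenever $(\bar u-1)^k=0$. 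This unification is arguably cleaner than the paper's route, which needs two separate external inputs for the two properties; the cost is that you must actually carry out the nilpotence induction that the paper delegates to Connell. All the individual steps you sketch (centrality of $\bar\varepsilon$ in $RG$, the identity $(e-f)^3=e-f$ for commuting idempotents, the lifting of nilpotence modulo a nil ideal) check out.
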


\begin{proof}
According to Lemma \ref{4-01}, we have $2\in {\rm nil}(R)$, whence
$2\in J(R)$. Hence, $R$ is abelian with $2\in J(R)$. Now,
\cite[Lemma 11]{CNZ} enables us that every idempotent of $RG$ lies
in $R$ since $G$ is a locally finite $2$-group, which means that
$RG$ is abelian. To prove now that $RG$ is a {\rm CUNC}-ring, as
pursued, we only need to show that $RG$ is a UU-ring. Indeed, as $G$
is a locally finite $2$-group and $2\in {\rm nil}(R)$, the ideal
$\omega (RG)$ is nil owing to \cite[Proposition 16(ii)]{Co}.
However, we observe that $R\cong RG/\omega (RG)$ is a {\rm
CUNC}-ring. Finally, \cite[Theorem 2.4(1)]{DL} allows us to conclude
that $RG$ is a UU-ring, as desired.
\end{proof}

Now, combining Lemma \ref{4-05} with Lemma \ref{4-06}, we can extract the following chief result of this section.

\begin{theorem}\label{4-8}
Let $R$ be a ring and let $G$ be a locally finite group. Then $RG$ is a {\rm CUNC}-ring if, and only if, $R$ is a {\rm CUNC}-ring and $G$ is a $2$-group.
\end{theorem}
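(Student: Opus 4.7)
The plan is to obtain the theorem as an immediate corollary by assembling Lemmas~\ref{4-05} and \ref{4-06}, which have been set up precisely so as to cover the two implications separately. The genuine mathematical content lives inside those lemmas; the proof at this level is essentially a bookkeeping step.

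For the necessity direction, I would assume that $RG$ is a {\rm CUNC}-ring and invoke Lemma~\ref{4-05} directly, which yields that $R$ is a {\rm CUNC}-ring and that $G$ is a $2$-group. It is worth pointing out that this direction does \emph{not} require the hypothesis that $G$ be locally finite; the argument via the UU-property forces every $g \in G$ to satisfy $(1-g)^{2^m}=0$ in $RG/2RG$ for some $m$, and hence $g^{2^m}=1$, regardless of finiteness assumptions.

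For the sufficiency direction, I would assume that $R$ is a {\rm CUNC}-ring and that $G$ is a $2$-group. Combined with the standing hypothesis that $G$ is locally finite, this places us exactly within the scope of Lemma~\ref{4-06}, whose conclusion is that $RG$ is a {\rm CUNC}-ring. Nothing further needs to be verified.

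The real obstacle has been absorbed into Lemma~\ref{4-06}: one needs the augmentation ideal $\omega(RG)$ to be nil, which uses both that $2 \in {\rm nil}(R)$ (available from Lemma~\ref{4-01}(1)) and that $G$ is a locally finite $2$-group, invoking \cite[Proposition 16(ii)]{Co}. Without local finiteness the nilpotence of $\omega(RG)$ can fail, so the UU-lifting argument via \cite[Theorem 2.4(1)]{DL} no longer applies, and the reverse implication is not known to hold in general; this is exactly why the theorem is stated with the locally finite restriction on $G$.
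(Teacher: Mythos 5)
Your proposal is correct and matches the paper exactly: the paper also obtains Theorem~\ref{4-8} by simply combining Lemma~\ref{4-05} (necessity) with Lemma~\ref{4-06} (sufficiency), with no additional argument at this level. Your side observations about where local finiteness is and is not needed are accurate but not required.
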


It is worthwhile noticing that a valuable non-trivial necessary and sufficient condition for a group ring to be UU was found in \cite{DA}. Notice also that Theorem~\ref{3-01} is a guarantor that CUNC-rings are always abelian UU-rings, and vice versa.

\medskip

We close our investigation with the following two intriguing and non-trivial questions, which are closely related to our presentation given above.

\medskip

\noindent{\bf Problem 1.} Examine those rings whose nil-clean elements are either uniquely clean or uniquely strongly clean.

\medskip

\noindent{\bf Problem 2.} Examine those rings whose strongly clean elements are either uniquely clean or uniquely strongly clean.

\medskip
\medskip
\medskip

\centerline{\bf FUNDING}

The first-named author of this research paper was supported by Anhui Provincial Natural Science Foundation (No. 2008085MA06), Key Laboratory of Financial Mathematics of Fujian Province University (Putian University) (No. JR202203) and the Key project of Anhui Education Committee (No. gxyqZD2019009).

The second-named author of this research paper was partially supported by the Bulgarian National Science Fund under Grant KP-06 No. 32/1 of December 07, 2019, as well as by the Junta de Andaluc\'ia, Grant FQM 264, and by the BIDEB 2221 of T\"UB\'ITAK.

\vskip3.0pc

\end{document}